\newtheorem{theorem}{Theorem}
\theoremstyle{plain}
\newtheorem{definition}{Definition}
\newtheorem{example}{Example}
\newtheorem{lemma}{Lemma}
\newtheorem{remark}{Remark}
\numberwithin{equation}{section}
\begin{document}
\title[Existence of fixed points]{Existence of Fixed points for Condensing
Operators Under an Integral Condition}
\author{Vatan KARAKAYA}
\address{Department of Mathematical Engineering, Faculty of
Chemistry-Metallurgical, Yildiz Technical University, Istanbul, Turkey }
\email{vkkaya@yahoo.com}
\author{Nour El Houda BOUZARA}
\address{Department of Mathematics, Faculty of Science and Letters, Yildiz
Technical University, Istanbul, Turkey }
\email{bzr.nour@gmail.com}
\author{Kadri DOGAN}
\address{Department of Mathematical Engineering, Faculty of
Chemistry-Metallurgical, Yildiz Technical University, Istanbul, Turkey }
\email{dogankadri@hotmail.com}
\author{Yunus ATALAN}
\address{Department of Mathematics, Faculty of Science and Letters, Yildiz
Technical University, Istanbul, Turkey }
\email{yunus\_atalan@hotmail.com}
\subjclass{47H10, 47H08, 45Gxx}
\keywords{Measure of noncompactness, Fixed point, Integral condition,
integral equation.}

\begin{abstract}
Our aim in this paper is to present results of existence of fixed points for
continuous operators in Banach spaces using measure of noncompactness under
an integral condition. This results are generalization of results given by
A. Aghajania and M. Aliaskaria in \cite{agha} which are generalization of
Darbo's fixed point theorem. As application we use these results to solve\
an integral equations in Banach spaces.
\end{abstract}

\maketitle

\section{INTRODUCTION AND PRELIMINARIES}

The Darbo's fixed point theorem which guarantees the existence of fixed
point for so called condensing mappings is very famous, since it generalizes
two important theorems : The Banach principle theorem and The classical
Schauder theorem. Over recent years, many authors presented works that give
generalization of this theorem, see Aghajani et al. in \cite{aghabanas}, 
\cite{agha}, \cite{mursaleen}, Samadi and Ghaemi in \cite{Samadi}, \cite%
{samadi} and others.

Throughout this paper, $X$ is assumed to be a Banach space and $BC\left( 
\mathbb{R}^{+}\right) $ is the space of all real functions defined, bounded
and continuous on $\mathbb{R}^{+}$. The family of bounded subset, closure
and closed convex hull of $X$ are denoted by $\mathcal{B}_{X}$, $\overline{X}
$ and $ConvX$, respectively.

\begin{definition}
\cite{banas2} Let $X$ be a Banach space and $\mathcal{B}_{X}$ the family of
bounded subset of $X.$ A map%
\begin{equation*}
\mu :\mathcal{B}_{X}\rightarrow \left[ 0,\infty \right)
\end{equation*}%
is called measure of noncompactness defined on $X$ if it satisfies the
following:

\begin{enumerate}
\item $\mu \left( A\right) =0\Leftrightarrow A$ is a precompact set.

\item $A\subset B\Rightarrow \mu \left( A\right) \leqslant \mu \left(
B\right) .$

\item $\mu \left( A\right) =\mu \left( \overline{A}\right) ,$ $\forall A\in 
\mathcal{B}_{X}.$

\item $\mu \left( ConvA\right) =\mu \left( A\right) .$

\item $\mu \left( \lambda A+\left( 1-\lambda \right) B\right) \leqslant
\lambda \mu \left( A\right) +\left( 1-\lambda \right) \mu \left( B\right) ,$
for $\lambda \in \left[ 0,1\right] .$

\item Let $\left( A_{n}\right) $ be a sequence of closed sets from $\mathcal{%
B}_{X}$\ such that $A_{n+1}\subseteq A_{n},$ $\left( n\geqslant 1\right) $
and $\lim\limits_{n\rightarrow \infty }\mu \left( A_{n}\right) =0$, then the
intersection set $A_{\infty }=\bigcap\limits_{n=1}^{\infty }A_{n}$ is
nonempty and $A_{\infty }$ is precompact.
\end{enumerate}
\end{definition}

\begin{definition}
A summable function is function for which the integral exists and is finite.
\end{definition}

\begin{definition}
Consider a function $f:\mathbb{R}\rightarrow \mathbb{R}$ and a point $%
x_{0}\in \mathbb{R}$. The function $f$ is said to be upper (resp. lower)
semi-continuous at the point $x_{0}$ if 
\begin{equation*}
f\left( x_{0}\right) \geqslant \lim \sup_{x\rightarrow x_{0}}f(x)\text{ \ \ }%
(\text{resp}.f(x_{0})\leq \lim \inf_{x\rightarrow x_{0}}f(x).
\end{equation*}
\end{definition}

\begin{lemma}
\cite{aghabanas}\label{lem} Let $\psi :\mathbb{R}^{+}\rightarrow \mathbb{R}%
^{+}$ be a nondecreasing and upper semi-continuous function. Then,

$\lim\limits_{n\rightarrow \infty }$ $\psi ^{n}\left( t\right) =0$ for each $%
t>0$ $\Leftrightarrow $ \ $\psi \left( t\right) <t$ for any $t>0$.
\end{lemma}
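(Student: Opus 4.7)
The plan is to prove each direction separately, each by a short contradiction argument that exploits one of the two hypotheses on $\psi$.

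For the direction $(\Rightarrow)$, I would assume $\lim_{n\to\infty}\psi^{n}(t)=0$ for every $t>0$ and suppose for contradiction that there exists some $t_{0}>0$ with $\psi(t_{0})\geq t_{0}$. Using only the nondecreasing property, I would show by induction that $\psi^{n}(t_{0})\geq t_{0}$ for every $n\geq 1$: indeed, the monotonicity of $\psi$ propagates the inequality, since $\psi^{n+1}(t_{0})=\psi(\psi^{n}(t_{0}))\geq \psi(t_{0})\geq t_{0}$. Taking the limit contradicts the assumption that $\psi^{n}(t_{0})\to 0$. This direction does not use upper semi-continuity.

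For the direction $(\Leftarrow)$, I would fix $t>0$ and set $a_{n}=\psi^{n}(t)$. Since $\psi(s)<s$ for every $s>0$ and $\psi$ is nondecreasing, the sequence $(a_{n})$ is strictly decreasing as long as it is positive, and bounded below by $0$; hence it converges to some $L\geq 0$. The goal is to show $L=0$. Suppose not: then $L>0$ and, by hypothesis, $\psi(L)<L$. The key step is to combine upper semi-continuity of $\psi$ at $L$ with the convergence $a_{n}\to L$ to obtain
\begin{equation*}
L=\lim_{n\to\infty}a_{n+1}=\limsup_{n\to\infty}\psi(a_{n})\leq \limsup_{s\to L}\psi(s)\leq \psi(L),
\end{equation*}
which contradicts $\psi(L)<L$. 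Hence $L=0$.

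The main obstacle, such as it is, lies in the last display: one has to be careful that upper semi-continuity genuinely yields $\limsup_{n}\psi(a_{n})\leq \psi(L)$. This is straightforward from the definition of upper semi-continuity once one observes that $a_{n}\to L$ implies that every subsequential limit of $\psi(a_{n})$ is bounded above by $\limsup_{s\to L}\psi(s)$, which in turn is dominated by $\psi(L)$. Beyond this, both directions are essentially monotone-sequence arguments, so no substantive difficulty is expected.
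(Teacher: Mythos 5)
Your argument is correct and is essentially the standard proof: the paper itself states this lemma without proof, quoting it from the cited reference, and the proof given there is the same two-step argument you propose (monotone iteration for $(\Rightarrow)$, and for $(\Leftarrow)$ the monotone limit $L$ of $\psi^{n}(t)$ combined with upper semi-continuity to get $L\leq\psi(L)$, forcing $L=0$). The only point worth a half-line in a write-up is that if the iterates ever reach $0$ they stay there, since monotonicity together with $\psi(s)<s$ for all $s>0$ forces $\psi(0)=0$; otherwise nothing is missing.
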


\begin{theorem}[Banach contraction theorem]
\cite{agarwal} Let $X$ be a Banach space and $T:X\rightarrow X$ be a
contraction mapping on $X$, i.e. there is a nonnegative real number $k<1$
such that 
\begin{equation*}
\left\Vert Tx-Ty\right\Vert \leqslant k\left\Vert x-y\right\Vert \text{ \ \
\ }\forall x,y\in X.
\end{equation*}%
Then the map $T$ admits one and only one fixed point $x^{\ast }$ in $X$.
\end{theorem}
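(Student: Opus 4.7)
The plan is the classical Picard iteration argument, which proceeds by constructing a Cauchy sequence via repeated application of $T$ and exploiting completeness of $X$.

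First I would fix an arbitrary $x_{0}\in X$ and define the sequence $x_{n+1}=Tx_{n}$. By applying the contraction inequality inductively, I would establish that $\left\Vert x_{n+1}-x_{n}\right\Vert \leqslant k^{n}\left\Vert x_{1}-x_{0}\right\Vert $ for every $n\geqslant 0$. The next step is to show $(x_{n})$ is Cauchy: for $m>n$, the triangle inequality combined with the geometric bound gives
\begin{equation*}
\left\Vert x_{m}-x_{n}\right\Vert \leqslant \sum_{i=n}^{m-1}\left\Vert x_{i+1}-x_{i}\right\Vert \leqslant \left\Vert x_{1}-x_{0}\right\Vert \sum_{i=n}^{m-1}k^{i}\leqslant \frac{k^{n}}{1-k}\left\Vert x_{1}-x_{0}\right\Vert ,
\end{equation*}
which tends to $0$ as $n\to \infty $ because $k<1$. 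Completeness of $X$ then supplies a limit $x^{\ast }\in X$.

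Next I would verify that $x^{\ast }$ is a fixed point. Since $T$ is a contraction it is Lipschitz and hence continuous, so passing to the limit in $x_{n+1}=Tx_{n}$ yields $x^{\ast }=Tx^{\ast }$. For uniqueness, I would suppose $y^{\ast }$ is another fixed point and observe that
\begin{equation*}
\left\Vert x^{\ast }-y^{\ast }\right\Vert =\left\Vert Tx^{\ast }-Ty^{\ast }\right\Vert \leqslant k\left\Vert x^{\ast }-y^{\ast }\right\Vert ,
\end{equation*}
which, since $k<1$, forces $\left\Vert x^{\ast }-y^{\ast }\right\Vert =0$.

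There is no real obstacle in this proof; the only mildly delicate point is making sure the geometric series estimate is uniform in $m$, so that the Cauchy conclusion is genuine rather than merely showing successive differences vanish. Everything else is a routine consequence of the contraction inequality and completeness of the ambient Banach space.
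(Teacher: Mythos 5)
Your proof is correct: the Picard iteration argument, the geometric-series Cauchy estimate, continuity of $T$ to pass to the limit, and the one-line uniqueness argument are all sound, and the uniform-in-$m$ bound $\frac{k^{n}}{1-k}\left\Vert x_{1}-x_{0}\right\Vert$ is exactly what makes the Cauchy claim legitimate. The paper itself gives no proof of this statement---it is quoted as a known background result with a citation to Agarwal, Meehan and O'Regan---so there is nothing to compare against; your argument is the standard one that the cited source uses.
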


\begin{theorem}[Schauder theorem]
\cite{agarwal} Let $A$ be a nonempty, convex, compact subset of a Banach
space $X$ and suppose $T:A\rightarrow A$ is continuous. Then $T$ has a fixed
point.
\end{theorem}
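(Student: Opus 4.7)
The plan is to reduce the infinite-dimensional problem to a sequence of finite-dimensional problems via Schauder projections, invoke Brouwer's classical theorem on each approximation, and pass to the limit using compactness of $A$.

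First, for every integer $n\geqslant 1$, I would exploit the compactness of $A$ to extract a finite $\frac{1}{n}$-net $\{x_{1}^{n},\ldots ,x_{k_{n}}^{n}\}\subset A$, i.e.\ a finite collection whose open balls of radius $1/n$ cover $A$. I then construct the Schauder projection $P_{n}:A\rightarrow Conv\{x_{1}^{n},\ldots ,x_{k_{n}}^{n}\}$ via the formula
\begin{equation*}
P_{n}(x)=\frac{\sum_{i=1}^{k_{n}}\lambda _{i}^{n}(x)\,x_{i}^{n}}{\sum_{i=1}^{k_{n}}\lambda _{i}^{n}(x)},\qquad \lambda _{i}^{n}(x)=\max \left\{ 0,\tfrac{1}{n}-\left\Vert x-x_{i}^{n}\right\Vert \right\} .
\end{equation*}
A routine check shows that $P_{n}$ is well defined (the denominator is strictly positive since the balls cover $A$), continuous on $A$, and satisfies $\left\Vert P_{n}(x)-x\right\Vert \leqslant \frac{1}{n}$ for every $x\in A$.

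Next, setting $A_{n}=Conv\{x_{1}^{n},\ldots ,x_{k_{n}}^{n}\}$, the composition $T_{n}=P_{n}\circ T$ restricted to $A_{n}$ maps $A_{n}$ continuously into itself. Since $A_{n}$ is a compact convex subset of the finite-dimensional subspace spanned by $x_{1}^{n},\ldots ,x_{k_{n}}^{n}$, Brouwer's fixed point theorem yields some $y_{n}\in A_{n}$ with $P_{n}(T(y_{n}))=y_{n}$. The compactness of $A$ then lets me extract a subsequence $y_{n_{k}}\rightarrow y^{\ast }\in A$, and using
\begin{equation*}
\left\Vert y_{n_{k}}-T(y_{n_{k}})\right\Vert =\left\Vert P_{n_{k}}(T(y_{n_{k}}))-T(y_{n_{k}})\right\Vert \leqslant \tfrac{1}{n_{k}},
\end{equation*}
together with continuity of $T$, passing to the limit gives $y^{\ast }=T(y^{\ast })$.

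The principal technical obstacle is the construction of the Schauder projection and the verification of its $\frac{1}{n}$-approximation property, because that is the step which effectively converts infinite-dimensional data into finite-dimensional data amenable to Brouwer. Once this is in place, Brouwer's theorem (which I would take as a deep prerequisite) and the limiting argument are essentially routine. I would not route the proof through Darbo's theorem or the measure-of-noncompactness machinery introduced in the preliminaries, since Schauder's theorem is the base case and Darbo-type generalizations are customarily built on top of it rather than the other way round.
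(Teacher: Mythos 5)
Your proof is correct: this is the classical Schauder-projection argument, and all the key checks go through as you indicate --- $P_{n}$ is well defined and continuous because the open $\tfrac{1}{n}$-balls around the net points cover $A$, $P_{n}$ moves no point of $A$ by more than $\tfrac{1}{n}$ since it is a convex combination of net points within distance $\tfrac{1}{n}$ of $x$, the composition $P_{n}\circ T$ is a continuous self-map of the finite-dimensional compact convex set $A_{n}=Conv\{x_{1}^{n},\ldots ,x_{k_{n}}^{n}\}\subset A$ so Brouwer applies, and the limit step using $\Vert y_{n_{k}}-T(y_{n_{k}})\Vert \leqslant \tfrac{1}{n_{k}}$, compactness of $A$ and continuity of $T$ is sound. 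Note that the paper does not prove this statement at all: Schauder's theorem is quoted as a preliminary with a citation to the book of Agarwal, Meehan and O'Regan, so there is no in-paper proof to compare with; your argument is the standard textbook proof of that cited result, and you are also right that it should not be routed through Darbo's theorem or the measure-of-noncompactness machinery, which are built on top of Schauder rather than the reverse.
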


The famous Darbo's theorem is as following

\begin{theorem}[Darbo's theorem]
\cite{lec.note} Let $C$ be a nonempty closed, bounded and convex subset of $%
X $. If $T:C\rightarrow C$ is a continuous mapping 
\begin{equation*}
\mu \left( TA\right) \leqslant k\mu \left( A\right) ,\text{ \ }k\in \left[
0,1\right) \text{,}
\end{equation*}%
then T has a fixed point.
\end{theorem}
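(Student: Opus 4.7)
The plan is to build a decreasing sequence of nonempty closed convex $T$-invariant sets whose measure of noncompactness shrinks geometrically, then apply property (6) of $\mu$ to extract a nonempty compact convex set on which Schauder's theorem applies.

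Concretely, I would set $C_{0}=C$ and define recursively
\begin{equation*}
C_{n+1}=\overline{Conv}\bigl(T(C_{n})\bigr),\qquad n\geqslant 0.
\end{equation*}
The first step is a straightforward induction showing that each $C_{n}$ is nonempty, closed, convex, contained in $C$, that $C_{n+1}\subseteq C_{n}$, and that $T(C_{n})\subseteq C_{n}$. The base case $C_{1}\subseteq C_{0}$ follows because $T(C)\subseteq C$ and $C$ is closed and convex; the inductive step uses the monotonicity $T(C_{n+1})\subseteq T(C_{n})\subseteq C_{n+1}$ together with the fact that $C_{n+1}$ is closed and convex.

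Next, I would combine the condensing hypothesis with properties (3) and (4) of the measure of noncompactness to obtain
\begin{equation*}
\mu (C_{n+1})=\mu \bigl(\overline{Conv}(T(C_{n}))\bigr)=\mu \bigl(T(C_{n})\bigr)\leqslant k\,\mu (C_{n}),
\end{equation*}
whence $\mu (C_{n})\leqslant k^{n}\mu (C)$. Since $k\in [0,1)$ and $C$ is bounded, $\mu (C_{n})\to 0$.

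Property (6) of Definition~1 then guarantees that $C_{\infty }=\bigcap_{n\geqslant 0}C_{n}$ is nonempty and precompact; as an intersection of closed convex sets it is also closed and convex, hence compact. Finally, $T(C_{\infty })\subseteq \bigcap_{n}T(C_{n})\subseteq \bigcap_{n}C_{n+1}=C_{\infty }$, so $T$ maps the nonempty compact convex set $C_{\infty }$ continuously into itself, and Schauder's theorem produces the desired fixed point.

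The only delicate point is verifying that the construction really propagates the three properties we need simultaneously (closedness, convexity and $T$-invariance); once this is in place, the geometric decay of $\mu (C_{n})$ and the axiomatic property (6) do all of the analytic work, and Schauder's theorem supplies the conclusion with no further effort.
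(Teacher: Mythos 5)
Your proposal is correct, and it is essentially the approach the paper relies on: the paper only cites Darbo's theorem without proof, but its own proof of Theorem \ref{mainth} uses exactly your scheme (the iteration $C_{n+1}=\overline{Conv}\left(TC_{n}\right)$, decay of $\mu\left(C_{n}\right)$, property (6) of the measure of noncompactness, and then a fixed point on the compact convex intersection). If anything, your version is more careful than the paper's, since you verify closedness, convexity and $T$-invariance explicitly and make the final appeal to Schauder's theorem on $C_{\infty}$ explicit.
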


\begin{theorem}
\cite{agha} Let $X$ be a Banach space and $A$ be a nonempty, closed, bounded
and convex subset of a Banach space $X$ and let $T:A\rightarrow A$ be a
continuous \ operator which satisfies the following inequality%
\begin{equation*}
\int_{0}^{\mu \left( TX\right) }\varphi \left( \gamma \right) d\gamma
\leqslant \Psi \left( \int_{0}^{\mu \left( X\right) }\varphi \left( \gamma
\right) d\gamma \right) ,
\end{equation*}%
where $\mu $ is a measure of noncompactness, $\Psi :\mathbb{R}%
^{+}\rightarrow \mathbb{R}^{+}$ a nondecreasing function such that $%
\lim\limits_{n\rightarrow \infty }\Psi ^{n}\left( t\right) =0,$ $\forall
t\geqslant 0$ and $\varphi :\left[ 0,+\infty \right[ \rightarrow \left[
0,+\infty \right[ $ is integral mapping which is summable on each compact
subset of $\left[ 0,+\infty \right[ $ and for each $\epsilon >0,$ $%
\int_{0}^{\epsilon }\varphi \left( \gamma \right) d\gamma >0.$

Then $T$ has at least one fixed point in $X$.
\end{theorem}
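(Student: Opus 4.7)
The plan is to mimic the classical proof of Darbo's theorem, replacing the linear contraction condition on $\mu$ by the integral condition. First I would construct a nested sequence of nonempty, closed, bounded, convex sets by setting $A_{0}=A$ and $A_{n+1}=\overline{Conv}(TA_{n})$. Since $TA\subseteq A$ and $A$ is closed and convex, a straightforward induction gives $A_{n+1}\subseteq A_{n}$ for all $n\geq 0$, and each $A_{n}$ inherits nonemptiness, closedness, convexity, and boundedness from $A$.

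Next I would compute $\mu(A_{n})$. Using properties (3) and (4) of the measure of noncompactness,
\begin{equation*}
\mu(A_{n+1})=\mu\bigl(\overline{Conv}(TA_{n})\bigr)=\mu(TA_{n}),
\end{equation*}
so applying the hypothesized inequality with the arbitrary subset taken to be $A_{n}$ yields
\begin{equation*}
\int_{0}^{\mu(A_{n+1})}\varphi(\gamma)\,d\gamma\leq \Psi\!\left(\int_{0}^{\mu(A_{n})}\varphi(\gamma)\,d\gamma\right).
\end{equation*}
Iterating, since $\Psi$ is nondecreasing, I obtain $\int_{0}^{\mu(A_{n})}\varphi(\gamma)\,d\gamma\leq \Psi^{n}\!\left(\int_{0}^{\mu(A_{0})}\varphi(\gamma)\,d\gamma\right)$, and the assumption $\Psi^{n}(t)\to 0$ drives the right-hand side to $0$.

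The step I expect to require the key hypothesis on $\varphi$, and which is the main subtlety of the proof, is transferring this convergence of integrals back to convergence of $\mu(A_{n})$ itself. I would argue by contradiction: if $\mu(A_{n})\not\to 0$, there exist $\varepsilon>0$ and a subsequence with $\mu(A_{n_{k}})\geq \varepsilon$; then by nonnegativity of $\varphi$,
\begin{equation*}
\int_{0}^{\mu(A_{n_{k}})}\varphi(\gamma)\,d\gamma\geq \int_{0}^{\varepsilon}\varphi(\gamma)\,d\gamma>0,
\end{equation*}
contradicting the previous step. Hence $\mu(A_{n})\to 0$.

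Finally I would invoke property (6) of the measure of noncompactness to conclude that $A_{\infty}=\bigcap_{n\geq 0}A_{n}$ is nonempty and precompact; since each $A_{n}$ is closed and convex, $A_{\infty}$ is closed and convex, hence compact and convex. The containment $T(A_{\infty})\subseteq T(A_{n})\subseteq A_{n+1}$ for every $n$ shows $T(A_{\infty})\subseteq A_{\infty}$, so $T$ restricts to a continuous self-map of a nonempty, compact, convex set. Schauder's theorem then furnishes a fixed point, completing the argument.
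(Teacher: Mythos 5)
Your proposal is correct and follows essentially the same route as the paper's own argument for Theorem \ref{mainth}, of which the quoted result is the special case $\Phi(t)=t$: the nested sets $A_{n+1}=\overline{Conv}(TA_{n})$, the iteration $\int_{0}^{\mu(A_{n})}\varphi \leqslant \Psi^{n}\bigl(\int_{0}^{\mu(A_{0})}\varphi\bigr)$, and property (6) of the measure of noncompactness. You in fact make explicit two steps the paper passes over quickly, namely the contradiction argument using $\int_{0}^{\epsilon}\varphi(\gamma)\,d\gamma>0$ to get $\mu(A_{n})\to 0$ from the convergence of the integrals, and the verification that $T(A_{\infty})\subseteq A_{\infty}$ before invoking Schauder's theorem.
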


\section{\protect\bigskip MAIN RESULTS}

\begin{theorem}
\label{mainth}Let $X$ be a Banach space and $A$ be a nonempty, closed,
bounded and convex subset of a Banach space $X$ and let $T:A\rightarrow A$
be a continuous \ operator which satisfies the following inequality%
\begin{equation}
\Phi \left( \int_{0}^{\mu \left( TX\right) }\varphi \left( \gamma \right)
d\gamma \right) \leqslant \Psi \left( \int_{0}^{\mu \left( X\right) }\varphi
\left( \gamma \right) d\gamma \right) ,  \label{ineq}
\end{equation}%
where $\mu $ is a measure of noncompactness and

\begin{enumerate}
\item[$\left( i\right) $] $\Psi :\mathbb{R}^{+}\rightarrow \mathbb{R}^{+}$
is a nondecreasing and concave function such that $\lim\limits_{n\rightarrow
\infty }\Psi ^{n}\left( t\right) =0,$ $\forall t\geqslant 0.$

\item[$\left( ii\right) $] $\Phi :\mathbb{R}^{+}\rightarrow \mathbb{R}^{+}$
is a nondecreasing subadditive function such that $\Phi \left( t\right)
\geqslant t$ and$\ \lim\limits_{n\rightarrow \infty }\Phi \left(
x_{n}\right) =0\Leftrightarrow \lim\limits_{n\rightarrow \infty }x_{n}=0$.

\item[$\left( iii\right) $] $\varphi :\left[ 0,+\infty \right[ \rightarrow %
\left[ 0,+\infty \right[ $ is an integral mapping which is summable on each
compact subset of $\left[ 0,+\infty \right[ $ and for each $\epsilon >0,$ $%
\int_{0}^{\epsilon }\varphi \left( \omega \right) d\omega >0.$
\end{enumerate}

Then $T$ has at least one fixed point in $X$.
\end{theorem}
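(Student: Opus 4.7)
The plan is to imitate the classical Darbo-style iteration, with the integral quantity $a_n = \int_0^{\mu(A_n)} \varphi(\gamma)\, d\gamma$ playing the role of the measure of noncompactness, and then invoke Schauder at the end.

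First I would build the usual descending sequence of sets. Set $A_0 = A$ and, inductively, $A_{n+1} = \overline{\mathrm{Conv}}\,(T A_n)$. Each $A_n$ is nonempty, closed, convex, bounded and $T$-invariant ($T A_n \subseteq A_{n+1} \subseteq A_n$), and by properties (3)--(4) of the measure of noncompactness, $\mu(A_{n+1}) = \mu(T A_n)$. If some $\mu(A_{n_0}) = 0$ I am done immediately by Schauder applied to $A_{n_0}$, so I may assume $\mu(A_n) > 0$ throughout.

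The heart of the argument is to show $a_n \to 0$. Writing the hypothesis (\ref{ineq}) for $X = A_n$ gives
\begin{equation*}
\Phi(a_{n+1}) = \Phi\!\left(\int_0^{\mu(A_{n+1})} \varphi(\gamma)\, d\gamma\right) \leq \Psi\!\left(\int_0^{\mu(A_n)} \varphi(\gamma)\, d\gamma\right) = \Psi(a_n).
\end{equation*}
Using $\Phi(t) \geq t$ from (ii), this collapses to $a_{n+1} \leq \Psi(a_n)$, and since $\Psi$ is nondecreasing an easy induction yields $a_n \leq \Psi^n(a_0)$. The assumption $\Psi^n(t)\to 0$ then forces $a_n \to 0$. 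To pass from $a_n \to 0$ back to $\mu(A_n) \to 0$, I argue by contradiction: if $\mu(A_n) \not\to 0$, there is a subsequence with $\mu(A_{n_k}) \geq \varepsilon > 0$, whence by monotonicity of the integral $a_{n_k} \geq \int_0^\varepsilon \varphi(\gamma)\, d\gamma > 0$ by (iii), contradicting $a_n \to 0$.

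Once $\mu(A_n) \to 0$ is established, property (6) of the measure of noncompactness says $A_\infty = \bigcap_{n\geq 1} A_n$ is nonempty and precompact; it is also closed, convex, and $T$-invariant (since $T A_\infty \subseteq \bigcap_n T A_n \subseteq \bigcap_n A_{n+1} = A_\infty$). Being closed and precompact it is compact, so Schauder's theorem applied to the restriction $T|_{A_\infty}$ produces the desired fixed point.

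The only delicate step is the implication $a_n \to 0 \Rightarrow \mu(A_n) \to 0$; this is where hypothesis (iii) on $\varphi$ is essential, and it is the place where one must be careful not to accidentally demand continuity of $\varphi$ that is not assumed. Everything else is a direct chaining of the hypotheses on $\Phi$ and $\Psi$ with the standard Darbo scheme.
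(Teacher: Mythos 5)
Your proof is correct and follows essentially the same Darbo-type iteration as the paper: the same sets $A_{n+1}=\mathrm{Conv}\left( TA_{n}\right) $, the same chained use of inequality (\ref{ineq}) together with $\Phi \left( t\right) \geqslant t$ and the monotonicity of $\Psi $, and the same conclusion via property (6) of the measure of noncompactness and Schauder's theorem. If anything your version is slightly tidier, since collapsing to $a_{n+1}\leqslant \Psi \left( a_{n}\right) $ lets you avoid invoking the limit equivalence in condition $(ii)$, and you spell out the step $a_{n}\rightarrow 0\Rightarrow \mu \left( A_{n}\right) \rightarrow 0$ (via condition $(iii)$) that the paper leaves implicit.
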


\begin{proof}
Consider $\left( A_{n}\right) _{n=0}^{\infty }$, a closed and convex
sequence of subset of $X$ such that $A_{n+1}=Conv\left( TA_{n}\right) $. We
notice that $A_{1}=Conv\left( TA_{0}\right) \subseteq A_{0}$ and $%
A_{2}=Conv\left( TA_{1}\right) \subseteq A_{1}$. By induction, we get 
\begin{equation*}
...A_{n+1}\subseteq A_{n}\subseteq ...\subseteq A_{0}.
\end{equation*}%
In further, we have%
\begin{eqnarray*}
\int_{0}^{\mu \left( A_{n+1}\right) }\varphi \left( \gamma \right) d\gamma
&=&\int_{0}^{\mu \left( Conv\left( TA_{n}\right) \right) }\varphi \left(
\gamma \right) d\gamma \\
&=&\int_{0}^{\mu \left( TA_{n}\right) }\varphi \left( \gamma \right) d\gamma
.
\end{eqnarray*}%
Using $\left( \text{\ref{ineq}}\right) ,$ we get 
\begin{equation}
\Phi \left( \int_{0}^{\mu \left( A_{n+1}\right) }\varphi \left( \gamma
\right) d\gamma \right) =\Phi \left( \int_{0}^{\mu \left( TA_{n}\right)
}\varphi \left( \gamma \right) d\gamma \right) \leqslant \Psi \left(
\int_{0}^{\mu \left( A_{n}\right) }\varphi \left( \gamma \right) d\gamma
\right) .  \label{pr1}
\end{equation}%
Moreover, using $\Phi \left( t\right) \geqslant t$ we get 
\begin{equation*}
\int_{0}^{\mu \left( A_{n}\right) }\varphi \left( \gamma \right) d\gamma
\leqslant \Phi \left( \int_{0}^{\mu \left( A_{n}\right) }\varphi \left(
\gamma \right) d\gamma \right) ,
\end{equation*}%
and%
\begin{eqnarray*}
\int_{0}^{\mu \left( A_{n}\right) }\varphi \left( \gamma \right) d\gamma
&=&\int_{0}^{\mu \left( Conv\left( TA_{n-1}\right) \right) }\varphi \left(
\gamma \right) d\gamma \\
&=&\int_{0}^{\mu \left( TA_{n-1}\right) }\varphi \left( \gamma \right)
d\gamma .
\end{eqnarray*}%
Then,%
\begin{eqnarray*}
\Psi \left( \int_{0}^{\mu \left( A_{n}\right) }\varphi \left( \gamma \right)
d\gamma \right) &=&\Psi \left( \int_{0}^{\mu \left( TA_{n-1}\right) }\varphi
\left( \gamma \right) d\gamma \right) \\
&\leqslant &\Psi \left( \Phi \left( \int_{0}^{\mu \left( TA_{n-1}\right)
}\varphi \left( \gamma \right) d\gamma \right) \right) \\
&\leqslant &\Psi ^{2}\left( \int_{0}^{\mu \left( A_{n-1}\right) }\varphi
\left( \gamma \right) d\gamma \right) .
\end{eqnarray*}%
Repeating this process $n$ times we get%
\begin{equation*}
\Phi \left( \int_{0}^{\mu \left( A_{n+1}\right) }\varphi \left( \gamma
\right) d\gamma \right) \leqslant \Psi ^{n+1}\left( \int_{0}^{\mu \left(
A_{0}\right) }\varphi \left( \gamma \right) d\gamma \right) .
\end{equation*}%
In view of condition $\left( i\right) ,$ we get $\lim\limits_{n\rightarrow
\infty }\Psi ^{n+1}\left( \int_{0}^{\mu \left( A_{0}\right) }\varphi \left(
\gamma \right) d\gamma \right) =0$. Then,%
\begin{equation*}
\lim\limits_{n\rightarrow \infty }\Phi \left( \int_{0}^{\mu \left(
A_{n+1}\right) }\varphi \left( \gamma \right) d\gamma \right) =0,
\end{equation*}%
using $\left( ii\right) $ we obtain%
\begin{equation*}
\lim\limits_{n\rightarrow \infty }\int_{0}^{\mu \left( A_{n+1}\right)
}\varphi \left( \gamma \right) d\gamma =0.
\end{equation*}%
It follows that%
\begin{equation*}
\lim\limits_{n\rightarrow \infty }\mu \left( A_{n+1}\right) =0.
\end{equation*}%
Consequently, $A_{\infty }$ is compact and then $T$ has at least one fixed
point.
\end{proof}

\begin{remark}

\begin{enumerate}
\item[$\left( i\right) $] $\Phi \left( x\right) =x$, then $\Phi :\mathbb{R}%
^{+}\rightarrow \mathbb{R}^{+}$ is nondecreasing additive $\left( \text{%
hence subadditive}\right) $ function and$\ \lim\limits_{n\rightarrow \infty
}\Phi \left( x_{n}\right) =0\Leftrightarrow \lim\limits_{n\rightarrow \infty
}x_{n}=0.$ Thus, $\Phi \left( x\right) =x$ satisfies condition $\left(
ii\right) $ of theorem $\left( \text{\ref{mainth}}\right) $ and inequality $%
\left( \text{\ref{ineq}}\right) $ will be%
\begin{equation*}
\int_{0}^{\mu \left( TX\right) }\varphi \left( \gamma \right) d\gamma
\leqslant \Psi \left( \int_{0}^{\mu \left( X\right) }\varphi \left( \gamma
\right) d\gamma \right) ,
\end{equation*}%
which is the condition given by Aghajani and Aliaskari in \cite{agha}.

\item[$\left( ii\right) $] $\Phi \left( x\right) =x,$ and $\Psi \left(
x\right) =kx,$ where $k\in \left[ 0,1\right[ .$ Since $\lim\limits_{n%
\rightarrow \infty }\Psi ^{n}\left( x\right) =\lim\limits_{n\rightarrow
\infty }k^{n}x=0.$\ Then, $\Psi $ and $\Phi $\ satisfy conditions $\left(
i\right) $ and $\left( ii\right) $ of theorem $\left( \text{\ref{mainth}}%
\right) .$ Then, inequality $\left( \text{\ref{ineq}}\right) $ become 
\begin{equation*}
\int_{0}^{\mu \left( TX\right) }\varphi \left( \gamma \right) d\gamma
\leqslant k\int_{0}^{\mu \left( X\right) }\varphi \left( \gamma \right)
d\gamma ,
\end{equation*}%
which is a generalization of the result given by Branciari in \cite{branc}.

\item[$\left( iii\right) $] $\Phi \left( x\right) =x,$ $\Psi \left( x\right)
=kx$ and $\varphi \left( x\right) =1.$ These functions satisfy conditions $%
\left( i\right) -\left( iii\right) $ of theorem $\left( \text{\ref{mainth}}%
\right) $ and instead of inequality $\left( \text{\ref{ineq}}\right) $ we
obtain the following inequality%
\begin{equation*}
\mu \left( TX\right) \leqslant k\mu \left( X\right) ,
\end{equation*}%
which is the condition given by Darbo in his famous fixed point theorem $%
\left( \text{see \cite{lec.note}}\right) $.
\end{enumerate}
\end{remark}

\section{APPLICATIONS}

In this section we use Theorem \ref{mainth} to study the resolvability of
the following integral equation in the Banach space $BC\left( \mathbb{R}%
^{+}\right) $ under more general hypothesis.%
\begin{equation}
x\left( t\right) =f\left( t,\int_{0}^{t}g\left( t,s,x\left( s\right) \right)
ds,x\left( t\right) \right) ,\text{ }t\in \mathbb{R}^{+}.  \label{eq}
\end{equation}%
In what follow we formulate the assumptions under which equation $\left( 
\text{\ref{eq}}\right) $ will be studied:

\begin{enumerate}
\item[$\left( i\right) $] The function $f:\mathbb{R}^{+}\times \mathbb{R}%
^{+}\times \mathbb{R}^{+}\rightarrow \mathbb{R}^{+}$ is continuous and $%
|f(t,x,0)|\in BC(\mathbb{R}^{+})$ for $t\in \mathbb{R}^{+}$, and $x\in 
\mathbb{R}$.

\item[$\left( ii\right) $] There exist a nondecreasing, concave and upper
semi-continuous function $\psi :\mathbb{R}^{+}\rightarrow \mathbb{R}^{+}$and
a nondecreasing, upper semi-continuous and sub-additive function $\phi :$ $%
\mathbb{R}^{+}\rightarrow \mathbb{R}^{+}$ such that $\Phi \left( t\right)
\geqslant t$ and $\lim\limits_{n\rightarrow \infty }\phi \left( x_{n}\right)
=0\Leftrightarrow \lim\limits_{n\rightarrow \infty }x_{n}=0,$ for which the
function $f:\mathbb{R}^{+}\times \mathbb{R}^{+}\times \mathbb{R}%
^{+}\rightarrow \mathbb{R}^{+}$ satisfies the conditions%
\begin{align*}
\Phi \left( \int^{\left\vert f\left( t,x,y_{1}\right) -f\left(
t,x,y_{2}\right) \right\vert }\varphi \left( \gamma \right) d\gamma \right)
& \leqslant \Psi \left( \int^{\left\vert y_{1}-y_{2}\right\vert }\varphi
\left( \gamma \right) d\gamma \right) , \\
\Phi \left( \int^{\left\vert f\left( t,x_{1},y\right) -f\left(
t,x_{1},y\right) \right\vert }\varphi \left( \gamma \right) d\gamma \right)
& \leqslant \Psi \left( \int^{\left\vert x_{1}-x_{2}\right\vert }\varphi
\left( \gamma \right) d\gamma \right) ,
\end{align*}%
where $\varphi :[0,\infty \lbrack \rightarrow \lbrack 0,\infty \lbrack $ is
summable on every compact subset of $[0,\infty \lbrack $ and for every $%
\epsilon >0$, $\int_{0}^{\epsilon }\varphi \left( \omega \right) d\omega >0.$

\item[$\left( iii\right) $] The function $g:\mathbb{R}^{+}\times \mathbb{R}%
^{+}\times \mathbb{R}\rightarrow \mathbb{R}$ is continuous and there exist
continuous functions $a,b:\mathbb{R}^{+}\rightarrow \mathbb{R}^{+}$ such
that $\lim\limits_{t\rightarrow \infty }a\left( t\right) =0$, $b\in L_{1}(%
\mathbb{R}^{+})$ and $|g(t,s,x)|\leqslant a(t)b(s)$ for $t,s\in \mathbb{R}%
^{+}$ such that $s\leqslant t$ and each $x\in \mathbb{R}.$

\item[$\left( iv\right) $] There exists at least one positive constant $%
r_{0} $ such that the following inequality holds,%
\begin{equation*}
\int_{0}^{r}\varphi \left( \gamma \right) d\gamma \leqslant
\int_{0}^{r}\varphi \left( \gamma \right) d\gamma +M_{0}+M_{1},
\end{equation*}%
where $M_{0}=\sup \left\{ \int_{0}^{\left\vert a\left( t\right) \right\vert
\int_{0}^{t}\left\vert b\left( s\right) \right\vert ds}\varphi \left( \gamma
\right) d\gamma \right\} $ and $M_{1}=\Phi \left( \int_{0}^{\sup \left\vert
f\left( t,0,0\right) \right\vert }\varphi \left( \gamma \right) d\gamma
\right) $.
\end{enumerate}

\begin{theorem}
Under the hypothesis $\left( i\right) -\left( iv\right) $ the integral
equation $\left( \text{\ref{eq}}\right) $ has at least one solution in the
space $BC\left( \mathbb{R}^{+}\right) $.
\end{theorem}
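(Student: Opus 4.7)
The plan is to recast the problem as a fixed-point problem for a suitable operator on a bounded ball of $BC(\mathbb{R}^{+})$ and then verify all hypotheses of Theorem \ref{mainth}. Define the operator $T:BC(\mathbb{R}^{+})\to BC(\mathbb{R}^{+})$ by
\begin{equation*}
(Tx)(t)=f\Bigl(t,\int_{0}^{t}g(t,s,x(s))\,ds,\,x(t)\Bigr),\qquad t\in\mathbb{R}^{+}.
\end{equation*}
First I would check that $Tx$ is well-defined, bounded and continuous on $\mathbb{R}^{+}$: continuity follows from $(i)$ and $(iii)$, while boundedness is obtained by writing $|(Tx)(t)|\le|(Tx)(t)-f(t,\int_0^t g\,ds,0)|+|f(t,\int_0^t g\,ds,0)-f(t,0,0)|+|f(t,0,0)|$ and using hypotheses $(ii)$, $(iii)$ together with the elementary bound $|\int_0^t g(t,s,x(s))ds|\le a(t)\int_0^t|b(s)|ds$. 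Invoking condition $(iv)$ one then selects $r_0>0$ so that $T$ maps the closed ball $B_{r_0}=\{x\in BC(\mathbb{R}^{+}):\|x\|\le r_0\}$ into itself; this ball is nonempty, closed, bounded and convex.

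Next I would establish continuity of $T$ on $B_{r_{0}}$. Using the two inequalities in $(ii)$ (applied through $\Phi(t)\ge t$), the modulus of continuity of $f$ on compacts, together with the Lebesgue-type estimate for $\int_0^t g(t,s,\cdot)\,ds$ coming from $(iii)$ (in particular $a(t)\to0$ to handle the tail at infinity), one shows that $\|Tx_n-Tx\|\to0$ whenever $x_n\to x$ in $BC(\mathbb{R}^{+})$.

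The heart of the proof is the verification of inequality \eqref{ineq} on $B_{r_{0}}$ for the Banaś measure of noncompactness $\mu$ on $BC(\mathbb{R}^{+})$, which combines a modulus-of-continuity term $\omega_0(X)=\lim_{T\to\infty}\lim_{\delta\to0}\sup_{x\in X}\sup\{|x(t)-x(s)|:t,s\in[0,T],|t-s|\le\delta\}$ with a diameter-at-infinity term $\mathrm{diam}_\infty X=\limsup_{t\to\infty}\mathrm{diam}\,X(t)$. For a nonempty $X\subset B_{r_0}$ and $x,y\in X$, I would split
\begin{equation*}
|(Tx)(t)-(Ty)(t)|\le\bigl|f(t,I_x,x(t))-f(t,I_x,y(t))\bigr|+\bigl|f(t,I_x,y(t))-f(t,I_y,y(t))\bigr|,
\end{equation*}
where $I_z=\int_0^t g(t,s,z(s))ds$, and apply the two inequalities of hypothesis $(ii)$ termwise. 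Using the subadditivity and monotonicity of $\Phi$, the monotonicity of $\Psi$, and the fact that $|I_x-I_y|\le 2a(t)\int_0^t|b(s)|ds\to 0$ as $t\to\infty$, the contribution from the first argument of $f$ will vanish asymptotically, leaving
\begin{equation*}
\Phi\Bigl(\int_0^{\mu(TX)}\varphi(\gamma)d\gamma\Bigr)\le\Psi\Bigl(\int_0^{\mu(X)}\varphi(\gamma)d\gamma\Bigr).
\end{equation*}
Once this is in hand, Theorem \ref{mainth} applies directly to $T:B_{r_{0}}\to B_{r_{0}}$ and yields a fixed point, i.e.\ a solution of \eqref{eq}.

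The main obstacle I anticipate is the verification of \eqref{ineq}: one has to chase the subadditive $\Phi$ and the concave, nondecreasing $\Psi$ through a double decomposition of $|(Tx)(t)-(Ty)(t)|$, carefully use concavity of $\Psi$ (to pass the estimate through the sum) together with $\Phi(t)\ge t$, and combine the uniform control on $[0,N]$ (giving $\omega_0$) with the asymptotic control on $[N,\infty)$ (giving $\mathrm{diam}_\infty$). The remaining ingredients — self-mapping via $(iv)$, continuity, and the concluding application of Theorem \ref{mainth} — are standard once the measure-of-noncompactness inequality is secured.
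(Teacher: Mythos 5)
Your plan reproduces the paper's own argument essentially step for step: define $T$ on a ball $B_{r_{0}}$ of $BC(\mathbb{R}^{+})$ obtained from hypothesis $(iv)$, use the Bana\'{s} measure $\mu(X)=\omega_{0}(X)+\limsup_{t\rightarrow\infty}\operatorname{diam}X(t)$, estimate $|Tx(t)-Ty(t)|$ by the same two-term splitting fed through hypothesis $(ii)$, make the $g$-contribution vanish via $(iii)$ (with $a(t)\rightarrow 0$, $b\in L_{1}$), merge the two components of $\mu$ by subadditivity of $\Phi$ and concavity of $\Psi$, and conclude with Theorem \ref{mainth}; in addition you explicitly address continuity of $T$, which the paper leaves tacit. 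The only point to make sure you carry out in full is the $\omega_{0}$-estimate $\Phi\bigl(\int_{0}^{\omega_{0}(TX)}\varphi\bigr)\leqslant\Psi\bigl(\int_{0}^{\omega_{0}(X)}\varphi\bigr)$, which requires a separate three-term decomposition of $|Tx(t)-Tx(l)|$ and uniform continuity of $f$ and $g$ on compacts (as in the paper), since your displayed splitting of $|Tx(t)-Ty(t)|$ alone controls only the diameter-at-infinity part of $\mu$.
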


\begin{proof}
Study the solvability of equation $\left( \text{\ref{eq}}\right) $ is
equivalent to study the existence of fixed points of the following operator%
\begin{equation*}
Tx\left( t\right) =f\left( t,\int_{0}^{t}g\left( t,s,x\left( s\right)
\right) ds,x\left( t\right) \right) ,\text{ }t\in \mathbb{R}^{+}.
\end{equation*}%
For that we need to verify that under assumptions $\left( i\right) -\left(
iv\right) $ the operator $T$ satisfies the conditions of Theorem \ref{mainth}%
.

First, let recall the following notions, the measure of noncompactness for a
positive fixed $t$ on $\mathcal{B}_{BC\left( \mathbb{R}^{+}\right) }$ is
given by the following%
\begin{equation*}
\mu \left( X\right) =\omega _{0}\left( X\right) +\lim \sup_{t\rightarrow
\infty }diamX\left( t\right) ,
\end{equation*}%
where,%
\begin{equation*}
diamX\left( t\right) =\sup \left\{ \left\vert x\left( t\right) -y\left(
t\right) \right\vert :x,y\in X\right\} ,\text{ }X\left( t\right) =\left\{
x\left( t\right) :x\in X\right\} ,
\end{equation*}%
and%
\begin{equation*}
\omega _{0}\left( X\right) =\lim_{L\rightarrow \infty }\omega _{0}^{L}\left(
X\right) .
\end{equation*}%
\begin{equation*}
\omega _{0}^{L}\left( X\right) =\lim_{\epsilon \rightarrow 0}\omega
^{L}\left( X,\epsilon \right) ,
\end{equation*}%
\begin{equation*}
\omega ^{L}\left( X,\epsilon \right) =\sup \left\{ \omega ^{L}\left(
x,\epsilon \right) :x\in X\right\} ,
\end{equation*}%
\begin{equation*}
\omega ^{L}\left( x,\epsilon \right) =\sup \left\{ \left\vert x\left(
t\right) -x\left( s\right) \right\vert :t,s\in \left[ 0,L\right] ,\text{ }%
\left\vert t-s\right\vert \leqslant \epsilon \right\} \text{, for }L>0.
\end{equation*}%
To show that $T$ is self-mappings, that is, $T$ map a ball $B_{r_{0}}$ into
itself, let%
\begin{multline*}
\int_{0}^{\left\vert Tx\left( t\right) \right\vert }\varphi \left( \gamma
\right) d\gamma \leqslant \Phi \left( \int_{0}^{\left\vert Tx\left( t\right)
\right\vert }\varphi \left( \gamma \right) d\gamma \right) \\
=\Phi \left( \int_{0}^{\left\vert f\left( t,\int_{0}^{t}g\left( t,s,x\left(
s\right) \right) ds,x\left( t\right) \right) \right\vert }\varphi \left(
\gamma \right) d\gamma \right) \\
\leqslant \Phi \left( \int_{0}^{\left\vert f\left( t,\int_{0}^{t}g\left(
t,s,x\left( s\right) \right) ds,x\left( t\right) \right) -f\left(
t,\int_{0}^{t}g\left( t,s,x\left( s\right) \right) ds,0\right) \right\vert
+\left\vert f\left( t,\int_{0}^{t}g\left( t,s,x\left( s\right) \right)
ds,0\right) -f\left( t,0,0\right) \right\vert +\left\vert f\left(
t,0,0\right) \right\vert }\varphi \left( \gamma \right) d\gamma \right) \\
\leqslant \Phi \left( \int_{0}^{\left\vert f\left( t,\int_{0}^{t}g\left(
t,s,x\left( s\right) \right) ds,x\left( t\right) \right) -f\left(
t,\int_{0}^{t}g\left( t,s,x\left( s\right) \right) ds,0\right) \right\vert
}\varphi \left( \gamma \right) d\gamma \right) \\
+\Phi \left( \int_{0}^{\left\vert f\left( t,\int_{0}^{t}g\left( t,s,x\left(
s\right) \right) ds,0\right) -f\left( t,0,0\right) \right\vert }\varphi
\left( \gamma \right) d\gamma \right) +\Phi \left( \int_{0}^{\left\vert
f\left( t,0,0\right) \right\vert }\varphi \left( \gamma \right) d\gamma
\right) .
\end{multline*}%
Using $\left( ii\right) ,$ we get%
\begin{equation*}
\int_{0}^{\left\vert Tx\left( t\right) \right\vert }\varphi \left( \omega
\right) d\omega \leqslant \Psi \left( \int_{0}^{\left\vert x\left( t\right)
\right\vert }\varphi \left( \gamma \right) d\gamma \right) +\Psi \left(
\int_{0}^{\left\vert \int_{0}^{t}g\left( t,s,x\left( s\right) \right)
ds\right\vert }\varphi \left( \gamma \right) d\gamma \right) +\Phi \left(
\int_{0}^{\sup\limits_{t}\left\vert f\left( t,0,0\right) \right\vert
}\varphi \left( \gamma \right) d\gamma \right)
\end{equation*}%
In view of condition $\left( iii\right) $ and Lemma \ref{lem}, we obtain%
\begin{eqnarray*}
\int_{0}^{\left\vert Tx\left( t\right) \right\vert }\varphi \left( \gamma
\right) d\gamma &\leqslant &\int_{0}^{\left\vert x\left( t\right)
\right\vert }\varphi \left( \gamma \right) d\gamma +\int_{0}^{\left\vert
a\left( t\right) \right\vert \int_{0}^{t}\left\vert b\left( s\right)
\right\vert ds}\varphi \left( \gamma \right) d\gamma +M_{1} \\
&\leqslant &\int_{0}^{\left\vert x\left( t\right) \right\vert }\varphi
\left( \gamma \right) d\gamma +M_{0}+M_{1}.
\end{eqnarray*}

Finally, the assumption $\left( iv\right) $ guaranties the existence of a
constant $r_{0}$ such that $TB_{r_{0}}\subseteq B_{r_{0}}.$

Now, let show that $T$ satisfies Condition \ref{ineq} of Theorem \ref{mainth}%
.%
\begin{eqnarray*}
\Phi \left( \int_{0}^{\left\vert Tx\left( t\right) -Ty\left( t\right)
\right\vert }\varphi \left( \gamma \right) d\gamma \right) &\leqslant &\Phi
\left( \int_{0}^{\left\vert f\left( t,\int_{0}^{t}g\left( t,s,x\left(
s\right) \right) ds,x\left( t\right) \right) -f\left( t,\int_{0}^{t}g\left(
t,s,y\left( s\right) \right) ds,y\left( t\right) \right) \right\vert
}\varphi \left( \gamma \right) d\gamma \right) \\
&\leqslant &\Phi \left( \int_{0}^{\substack{ \left\vert f\left(
t,\int_{0}^{t}g\left( t,s,x\left( s\right) \right) ds,x\left( t\right)
\right) -f\left( t,\int_{0}^{t}g\left( t,s,y\left( s\right) \right)
ds,x\left( t\right) \right) \right\vert }}\varphi \left( \gamma \right)
d\gamma \right) \\
&&+\Phi \left( \int_{0}^{\left\vert f\left( t,\int_{0}^{t}g\left(
t,s,y\left( s\right) \right) ds,x\left( t\right) \right) -f\left(
t,\int_{0}^{t}g\left( t,s,y\left( s\right) \right) ds,y\left( t\right)
\right) \right\vert }\varphi \left( \gamma \right) d\gamma \right) .
\end{eqnarray*}%
Using assumption $\left( ii\right) ,$ we get%
\begin{equation}
\Phi \left( \int_{0}^{\left\vert Tx\left( t\right) -Ty\left( t\right)
\right\vert }\varphi \left( \gamma \right) d\gamma \right) \leqslant \Psi
\left( \int_{0}^{\substack{ \left\vert x\left( t\right) -y\left( t\right)
\right\vert }}\varphi \left( \gamma \right) d\gamma \right) +\Psi \left(
\int_{0}^{\substack{ \int_{0}^{t}\left\vert g\left( t,s,x\left( s\right)
\right) -g\left( t,s,y\left( s\right) \right) \right\vert ds}}\varphi \left(
\gamma \right) d\gamma \right)  \label{condit}
\end{equation}%
Moreover,%
\begin{eqnarray*}
\Psi \left( \int_{0}^{\substack{ \int_{0}^{t}\left\vert g\left( t,s,x\left(
s\right) \right) -g\left( t,s,y\left( s\right) \right) \right\vert ds}}%
\varphi \left( \gamma \right) d\gamma \right) &\leqslant &\Psi \left(
\int_{0}^{\int_{0}^{t}\left\vert g\left( t,s,x\left( s\right) \right)
-g\left( t,s,y\left( s\right) \right) \right\vert ds}\varphi \left( \gamma
\right) d\gamma \right) \\
&<&\int_{0}^{\int_{0}^{t}\left\vert g\left( t,s,x\left( s\right) \right)
-g\left( t,s,y\left( s\right) \right) \right\vert ds}\varphi \left( \gamma
\right) d\gamma \\
&\leqslant &\int_{0}^{\substack{ \int_{0}^{t}\left\vert g\left( t,s,x\left(
s\right) \right) \right\vert ds+\int_{0}^{t}\left\vert g\left( t,s,y\left(
s\right) \right) \right\vert ds}}\varphi \left( \gamma \right) d\gamma \\
&\leqslant &\int_{0}^{\substack{ \int_{0}^{t}\left\vert g\left( t,s,x\left(
s\right) \right) \right\vert ds+\int_{0}^{t}\left\vert g\left( t,s,y\left(
s\right) \right) \right\vert ds}}\varphi \left( \gamma \right) d\gamma \\
&\leqslant &\int_{0}^{\substack{ 2\left\vert a\left( t\right) \right\vert
\int_{0}^{t}\left\vert b\left( s\right) \right\vert ds}}\varphi \left(
\gamma \right) d\gamma .
\end{eqnarray*}%
Since $\lim\limits_{t\rightarrow \infty }a\left( t\right) =0$ and $b\in
L_{1}(\mathbb{R}^{+})$, then for $t\geqslant L$ $\left( \text{where }L\text{
is positve constant}\right) $ we have 
\begin{equation*}
\int_{0}^{\substack{ 2\left\vert a\left( t\right) \right\vert
\int_{0}^{t}\left\vert b\left( s\right) \right\vert ds}}\varphi \left(
\gamma \right) d\gamma \leqslant \epsilon ,
\end{equation*}%
where $\epsilon $ is an arbitrary positive number.

Consequently, Inequality \ref{condit} will be%
\begin{equation*}
\Phi \left( \int_{0}^{\left\vert Tx\left( t\right) -Ty\left( t\right)
\right\vert }\varphi \left( \gamma \right) d\gamma \right) \leqslant \Psi
\left( \int_{0}^{\substack{ \left\vert x\left( t\right) -y\left( t\right)
\right\vert }}\varphi \left( \gamma \right) d\gamma \right) .
\end{equation*}%
For $t\leqslant L,$ we have%
\begin{equation*}
\omega _{1}^{L}\left( g,\epsilon \right) =\sup \left\{
\int_{0}^{t}\left\vert g\left( t,s,x\left( s\right) \right) -g\left(
t,s,y\left( s\right) \right) \right\vert ds:t,s\in \left[ 0,L\right] ,\text{ 
}x,y\in B_{r_{0}}\text{ and }\left\Vert x-y\right\Vert \leqslant \epsilon
\right\} \text{.}
\end{equation*}%
Thus for $t\in \left[ 0,L\right] ,$ we obtain%
\begin{equation*}
\Phi \left( \int_{0}^{\left\vert Tx\left( t\right) -Ty\left( t\right)
\right\vert }\varphi \left( \gamma \right) d\gamma \right) \leqslant \Psi
\left( \int_{0}^{\substack{ \left\vert x\left( t\right) -y\left( t\right)
\right\vert }}\varphi \left( \gamma \right) d\gamma \right) +\Psi \left(
\int_{0}^{\omega _{1}^{L}\left( g,\epsilon \right) }\varphi \left( \gamma
\right) d\gamma \right) .
\end{equation*}%
Since $g$ is continuous, it is uniformly continuous on $\left[ 0,L\right]
\times \left[ 0,L\right] \times \left[ -r_{0},r_{0}\right] $. Then,%
\begin{equation*}
\lim_{\epsilon \rightarrow 0}\omega _{1}^{L}\left( g,\epsilon \right) =0.
\end{equation*}%
Consequently, Inequality \ref{condit} will be%
\begin{equation*}
\Phi \left( \int_{0}^{\left\vert Tx\left( t\right) -Ty\left( t\right)
\right\vert }\varphi \left( \gamma \right) d\gamma \right) \leqslant \Psi
\left( \int_{0}^{\substack{ \left\vert x\left( t\right) -y\left( t\right)
\right\vert }}\varphi \left( \gamma \right) d\gamma \right) .
\end{equation*}%
Thus, for every $t\geqslant 0$ we have%
\begin{equation*}
\Phi \left( \int_{0}^{\left\vert Tx\left( t\right) -Ty\left( t\right)
\right\vert }\varphi \left( \gamma \right) d\gamma \right) \leqslant \Psi
\left( \int_{0}^{\substack{ \left\vert x\left( t\right) -y\left( t\right)
\right\vert }}\varphi \left( \gamma \right) d\gamma \right) ,
\end{equation*}%
hence,%
\begin{equation*}
\Phi \left( \int_{0}^{\lim \sup\limits_{t\rightarrow \infty }Diam\left(
TX\left( t\right) \right) }\varphi \left( \gamma \right) d\gamma \right)
\leqslant \Psi \left( \int_{0}^{\lim \sup\limits_{t\rightarrow \infty
}Diam\left( X\left( t\right) \right) }\varphi \left( \gamma \right) d\gamma
\right) .
\end{equation*}%
Now, let consider%
\begin{eqnarray*}
\Phi \left( \int_{0}^{\left\vert Tx\left( t\right) -Tx\left( l\right)
\right\vert }\varphi \left( \gamma \right) d\gamma \right) &=&\Phi \left(
\int_{0}^{\left\vert f\left( t,\int_{0}^{t}g\left( t,s,x\left( s\right)
\right) ds,x\left( t\right) \right) -f\left( l,\int_{0}^{l}g\left(
l,s,x\left( s\right) \right) ds,x\left( l\right) \right) \right\vert
}\varphi \left( \gamma \right) d\gamma \right) \\
&\leqslant &\Phi \left( \int_{0}^{\left\vert f\left( t,\int_{0}^{t}g\left(
t,s,x\left( s\right) \right) ds,x\left( t\right) \right) -f\left(
l,\int_{0}^{t}g\left( t,s,x\left( s\right) \right) ds,x\left( t\right)
\right) \right\vert }\varphi \left( \gamma \right) d\gamma \right) \\
&&+\Phi \left( \int_{0}^{\left\vert f\left( l,\int_{0}^{t}g\left(
t,s,x\left( s\right) \right) ds,x\left( t\right) \right) -f\left(
l,\int_{0}^{l}g\left( l,s,x\left( s\right) \right) ds,x\left( t\right)
\right) \right\vert }\varphi \left( \gamma \right) d\gamma \right) \\
&&+\Phi \left( \int_{0}^{\left\vert f\left( l,\int_{0}^{l}g\left(
l,s,x\left( s\right) \right) ds,x\left( t\right) \right) -f\left(
l,\int_{0}^{l}g\left( l,s,x\left( s\right) \right) ds,x\left( l\right)
\right) \right\vert }\varphi \left( \gamma \right) d\gamma \right) \\
&\leqslant &\Phi \left( \int_{0}^{\left\vert f\left( t,\int_{0}^{t}g\left(
t,s,x\left( s\right) \right) ds,x\left( t\right) \right) -f\left(
l,\int_{0}^{t}g\left( t,s,x\left( s\right) \right) ds,x\left( t\right)
\right) \right\vert }\varphi \left( \gamma \right) d\gamma \right) \\
&&+\Psi \left( \int_{0}^{\int_{0}^{t}\left\vert g\left( t,s,x\left( s\right)
\right) -g\left( l,s,x\left( s\right) \right) \right\vert ds}\varphi \left(
\gamma \right) d\gamma \right) +\Psi \left( \int_{0}^{\left\vert x\left(
t\right) -x\left( l\right) \right\vert }\varphi \left( \gamma \right)
d\gamma \right) .
\end{eqnarray*}%
Putting,%
\begin{eqnarray*}
\omega ^{L}\left( gx,\epsilon \right) &=&\sup \left\{ \left\vert g\left(
t,s,x\left( s\right) \right) -g\left( l,s,x\left( s\right) \right)
\right\vert :l,t,s\in \left[ 0,L\right] ,\text{ }x\in B_{r_{0}}\text{ and }%
\left\vert t-l\right\vert \leqslant \epsilon \right\} , \\
\omega ^{L}\left( fx,\epsilon \right) &=&\sup \left\{ \left\vert f\left(
t,x,y\right) -f\left( l,x,y\right) \right\vert :l,t\in \left[ 0,L\right] ,%
\text{ }x,y\in B_{r_{0}}\text{ and }\left\vert t-l\right\vert \leqslant
\epsilon \right\} , \\
\omega ^{L}\left( Tx,\epsilon \right) &=&\sup \left\{ \left\vert Tx\left(
t\right) -Tx\left( l\right) \right\vert :l,t\in \left[ 0,L\right] ,\text{ }%
x\in B_{r_{0}}\text{ and }\left\vert t-l\right\vert \leqslant \epsilon
\right\} ,
\end{eqnarray*}%
we get%
\begin{equation*}
\Phi \left( \int_{0}^{\omega ^{L}\left( Tx,\epsilon \right) }\varphi \left(
\gamma \right) d\gamma \right) \leqslant \Phi \left( \int_{0}^{\omega
^{L}\left( fx,\epsilon \right) }\varphi \left( \gamma \right) d\gamma
\right) +\Psi \left( \int_{0}^{\omega ^{L}\left( gx,\epsilon \right)
}\varphi \left( \gamma \right) d\gamma \right) +\Psi \left( \int_{0}^{\omega
^{L}\left( x,\epsilon \right) }\varphi \left( \gamma \right) d\gamma \right)
.
\end{equation*}%
We know that $g$ is uniformly continuous on $\left[ 0,L\right] \times \left[
0,L\right] \times \left[ -r_{0},r_{0}\right] $ and $f$ is uniformly
continuous on $\left[ 0,L\right] \times \left[ -r_{0},r_{0}\right] \times %
\left[ -r_{0},r_{0}\right] ,$ then we get%
\begin{equation*}
\Phi \left( \int_{0}^{\omega ^{L}\left( Tx,\epsilon \right) }\varphi \left(
\gamma \right) d\gamma \right) \leqslant \Psi \left( \int_{0}^{\omega
^{L}\left( x,\epsilon \right) }\varphi \left( \gamma \right) d\gamma \right)
.
\end{equation*}%
and then,%
\begin{equation*}
\Phi \left( \int_{0}^{\omega ^{L}\left( TX,\epsilon \right) }\varphi \left(
\gamma \right) d\gamma \right) \leqslant \Psi \left( \int_{0}^{\omega
^{L}\left( X,\epsilon \right) }\varphi \left( \gamma \right) d\gamma \right)
.
\end{equation*}%
By taking $\epsilon \rightarrow 0$, $L\rightarrow \infty $ and using the
fact that $\Phi $ and $\Psi $ are semicontinuous, we obtain%
\begin{equation*}
\Phi \left( \int_{0}^{\omega _{0}^{L}\left( TX\right) }\varphi \left( \gamma
\right) d\gamma \right) \leqslant \Psi \left( \int_{0}^{\omega
_{0}^{L}\left( X\right) }\varphi \left( \gamma \right) d\gamma \right) .
\end{equation*}%
Finally,%
\begin{eqnarray*}
\Phi \left( \int_{0}^{\mu \left( TX\right) }\varphi \left( \gamma \right)
d\gamma \right) &\leqslant &\Phi \left( \int_{0}^{\lim
\sup\limits_{t\rightarrow \infty }Diam\left( TX\left( t\right) \right)
}\varphi \left( \gamma \right) d\gamma \right) +\Phi \left( \int_{0}^{\omega
_{0}^{L}\left( TX\right) }\varphi \left( \gamma \right) d\gamma \right) \\
&\leqslant &\Psi \left( \int_{0}^{\lim \sup\limits_{t\rightarrow \infty
}Diam\left( X\left( t\right) \right) }\varphi \left( \gamma \right) d\gamma
\right) +\Psi \left( \int_{0}^{\omega _{0}^{L}\left( X\right) }\varphi
\left( \gamma \right) d\gamma \right)
\end{eqnarray*}%
using the fact that $\Psi $ is concave, we get%
\begin{eqnarray*}
&&\Psi \left( \int_{0}^{\lim \sup\limits_{t\rightarrow \infty }Diam\left(
X\left( t\right) \right) }\varphi \left( \gamma \right) d\gamma \right)
+\Psi \left( \int_{0}^{\omega _{0}^{L}\left( X\right) }\varphi \left( \gamma
\right) d\gamma \right) \\
&=&\Psi \left( \frac{1}{2}\int_{0}^{\lim \sup\limits_{t\rightarrow \infty
}Diam\left( X\left( t\right) \right) }2\varphi \left( \gamma \right) d\gamma
\right) +\Psi \left( \frac{1}{2}\int_{0}^{\omega _{0}^{L}\left( X\right)
}2\varphi \left( \gamma \right) d\gamma \right) \\
&\leqslant &\Psi \left( \frac{1}{2}\left( \int_{0}^{\lim
\sup\limits_{t\rightarrow \infty }Diam\left( X\left( t\right) \right)
}2\varphi \left( \gamma \right) d\gamma +\int_{0}^{\omega _{0}^{L}\left(
X\right) }2\varphi \left( \gamma \right) d\gamma \right) \right) \\
&=&\Psi \left( \frac{1}{2}\int_{0}^{\mu \left( TX\right) }2\varphi \left(
\gamma \right) d\gamma \right)
\end{eqnarray*}%
Finally, we obtain%
\begin{equation*}
\Phi \left( \int_{0}^{\mu \left( TX\right) }\varphi \left( \gamma \right)
d\gamma \right) \leqslant \Psi \left( \int_{0}^{\mu \left( X\right) }\varphi
\left( \gamma \right) d\gamma \right) ,
\end{equation*}%
and $T$ has a fixed point.
\end{proof}

\begin{example}
let the following integral equation%
\begin{equation}
x\left( t\right) =\sin t+\ln \left( 1+\int_{0}^{t}\frac{1}{t^{2}+1}%
e^{-s^{2}}\cos x\left( t\right) ds\right) +\ln \left( 1+x\left( t\right)
\right) .  \label{exple}
\end{equation}%
Considering that by putting%
\begin{equation*}
f\left( t,x,y\right) =\sin t+\ln \left( 1+x\right) +\ln \left( 1+y\right)
\end{equation*}%
and%
\begin{equation*}
g\left( t,s,x\right) =\frac{1}{t^{2}+1}e^{-s^{2}}\cos x,
\end{equation*}%
we obtain an equation of the form $\left( \text{\ref{eq}}\right) $ and it
satisfies assumptions $\left( i-v\right) $. Indeed, it is to see that
assumption $\left( i\right) $ is satisfied and by taking $\Phi \left(
x\right) =kx$ for $k\in \left[ 0,1\right[ $, $\Psi \left( x\right) =\ln
\left( 1+x\right) $ and $\varphi \left( t\right) =1$, we get%
\begin{eqnarray*}
\Phi \left( \int_{0}^{\left\vert f\left( t,x,y_{1}\right) -f\left(
t,x,y_{2}\right) \right\vert }\varphi \left( \gamma \right) d\gamma \right)
&=&k\left\vert f\left( t,x,y_{1}\right) -kf\left( t,x,y_{2}\right)
\right\vert \\
&=&k\ln \left( \frac{1-\left\vert y_{1}\right\vert }{1-\left\vert
y_{2}\right\vert }\right) \\
&=&k\ln \left( 1+\frac{1+\left( \left\vert y_{2}\right\vert -\left\vert
y_{1}\right\vert \right) }{1-\left\vert y_{2}\right\vert }\right) \\
&\leqslant &k\ln \left( 1+\left( \left\vert y_{2}\right\vert -\left\vert
y_{1}\right\vert \right) \right) \\
&\leqslant &\ln \left( 1+\left\vert y_{2}-y_{1}\right\vert \right) \\
&=&\Psi \left( \left\vert y_{2}-y_{1}\right\vert \right) .
\end{eqnarray*}%
Hence,%
\begin{equation*}
\Phi \left( \int_{0}^{\left\vert f\left( t,x,y_{1}\right) -f\left(
t,x,y_{2}\right) \right\vert }\varphi \left( \gamma \right) d\gamma \right)
\leqslant \Psi \left( \int_{0}^{\left\vert y_{1}-y_{2}\right\vert }\varphi
\left( \gamma \right) d\gamma \right) .
\end{equation*}%
The same way we prove that,%
\begin{equation*}
\Phi \left( \int_{0}^{\left\vert f\left( t,x_{1},y\right) -f\left(
t,x_{2},y\right) \right\vert }\varphi \left( \gamma \right) d\gamma \right)
\leqslant \Psi \left( \int_{0}^{\left\vert x_{1}-x_{2}\right\vert }\varphi
\left( \gamma \right) d\gamma \right) .
\end{equation*}%
Then, assumption $\left( ii\right) $ holds.

In further,%
\begin{eqnarray*}
\left\vert g\left( t,s,x\right) \right\vert &=&\left\vert \frac{1}{t^{2}+1}%
e^{-s^{2}}\sin x\right\vert \\
&\leqslant &\left\vert \frac{1}{t^{2}+1}e^{-s^{2}}\right\vert ,
\end{eqnarray*}%
then by taking $a\left( t\right) =\frac{1}{t^{2}+1}$ and $b\left( s\right)
=e^{-s^{2}},$ it is easy to see that $g$, $a$ and $b$ satifies the
conditions in $\left( iii\right) .$

Finally,%
\begin{equation*}
\int_{0}^{r}\varphi \left( \gamma \right) d\gamma \leqslant
\int_{0}^{r}\varphi \left( \gamma \right) d\gamma +M_{0}+M_{1},
\end{equation*}%
where $M_{0}=\sup \left\{ \int_{0}^{\left\vert a\left( t\right) \right\vert
\int_{0}^{t}\left\vert b\left( s\right) \right\vert ds}\varphi \left( \gamma
\right) d\gamma \right\} $ and $M_{1}=\Phi \left( \int_{0}^{\sup \left\vert
f\left( t,0,0\right) \right\vert }\varphi \left( \gamma \right) d\gamma
\right) $, holds for every positive $r.$
\end{example}

\end{document}